\newtheorem{theorem}{Theorem}[section]
\newtheorem{lemma}[theorem]{Lemma}
\renewcommand{\Re}{\operatorname{Re}}
\theoremstyle{definition}
\newtheorem{remark}[theorem]{Remark}
\numberwithin{equation}{section}
\begin{document}
\title[The prime number theorem through one-sided Tauberian theorems]{The prime number theorem through one-sided Tauberian theorems}

\author[G. Debruyne]{Gregory Debruyne}
\thanks{G. Debruyne gratefully acknowledges support by a senior postdoctoral fellowship of Research--Foundation Flanders through the grant number 1249624N}
\address{G. Debruyne\\ Department of Mathematics: Analysis, Logic and Discrete Mathematics\\ Ghent University\\ Krijgslaan 281 Gebouw S8 3e verdieping\\ B 9000 Gent\\ Belgium}
\email{gregory.debruyne@ugent.be}

\subjclass[2020]{11M45, 40E05.}
\keywords{Complex Tauberians; Ingham-Karamata theorem; Laplace transform; one-sided Tauberian condition, prime number theorem, sharp Mertens relation}

\begin{abstract}
In this expository article we provide an elegant proof of the one-sided Ingham-Karamata Tauberian theorem. As an application, we present a short deduction of the prime number theorem.
\end{abstract}

\maketitle

\section{Introduction}
Over the last century Tauberian theory has grown into a strong and useful tool for providing asymptotic results in surprisingly diverse areas of mathematics, ranging from number theory, probability theory, combinatorics to differential equations \cite{a-b-h-n, binghambook, katznelson, korevaar2002, korevaarbook, shubin2001}.\par
Nevertheless, the original proofs of many influential Tauberian theorems were very complicated. In 1980, Newman \cite{newman} removed some of this criticism when he provided an attractive proof of a special case of the Ingham-Karamata Tauberian theorem \cite{ingham1935,karamata1934}, from which he was able to deduce the prime number theorem. To this day, his way to the prime number theorem is considered to be one of the shortest and most beautiful available in the literature, see also the presentations by Korevaar and Zagier \cite{korevaar1982, zagier1997}. Newman's proof has drawn the attention of many mathematicians, even outside number theory, towards the Ingham-Karamata theorem, which led to the discovery of new applications. For instance, Arendt and Batty \cite{a-b} realized that the Ingham-Karamata theorem has significant applications in the theory of $C_{0}$-semigroups. This field is now one of the main driving forces behind research on the Ingham-Karamata theorem \cite{a-b-h-n, b-c-t,b-d, Sta17}. The Ingham-Karamata theorem has come to be regarded as one of the landmarks of 20th century analysis \cite{C-Qbook} because of its many applications.\par 
Newman's Tauberian theorem is concerned with Dirichlet series with \emph{bounded} coefficients and is an example of a \emph{two-sided} Tauberian theorem, meaning that one requires a \emph{Tauberian condition}\footnote{The Tauberian condition is an a priori regularity hypothesis, imposed on the object one intends to deduce asymptotic information about, without which the Tauberian theorem usually becomes false. Typical Tauberian conditions are monotonicity (one-sided) and boundedness assumptions (two-sided). In Newman's theorem, the Tauberian condition is the boundedness of coefficients of the Dirichlet series. A one-sided version would be that the coefficients are only bounded from below.} that incorporates a priori both upper and lower bounds. This also constitutes the main weakness of Newman's theorem: in applications one sometimes has to work very hard to achieve a two-sided condition as it does not always come for free. Nowadays it is widely accepted that transparent proofs for two-sided Tauberian theorems do exist, but for the original one-sided Ingham-Karamata theorem, the available proofs remain quite challenging. In fact, much recent work on the Ingham-Karamata theorem focuses on two-sided versions and whenever a one-sided version is considered, its proof is still rather technical and very delicate (see e.g. \cite{Debruyne-VindasComplexTauberians} and \cite[Prop. III. 10.2, p. 143]{korevaarbook}). Tauberian theorems with one-sided Tauberian conditions are however very important in certain fields such as number theory where counting functions of non-negative arithmetic functions are studied. The goal of this paper is to provide a transparent proof of the one-sided Ingham-Karamata theorem. Furthermore, our argument can be generalized to other one-sided Tauberian theorems, showing that in many cases, the one-sided version should not be significantly harder to prove than its two-sided counterpart. 
\par
To conclude this paper, we use our results to give a new short proof of the prime number theorem (PNT). In some way, our approach to the PNT is more direct than Newman's. Namely, we shall use our Tauberian theorem to deduce the \emph{sharp Mertens relation} from which the PNT can be established by an easy application of summation by parts. The two-sided nature of Newman's Tauberian theorem forced him not to treat the prime counting function directly, but rather a related function admitting good a priori upper and lower bounds, that is, the M\"obius function $\mu$. Using his theorem, he was able to deduce
\begin{equation} \label{eqmobius}
 \lim_{x \rightarrow \infty} \sum_{n \leq x } \frac{\mu(n)}{n} = 0.
\end{equation}
It is then well-known that the PNT can be deduced from this relation by elementary means, but the deduction, although not too hard, lies at a somewhat deeper level than mere summation by parts (see e.g. \cite[Sect. 8.1]{MV07} or \cite[Th. 3.8]{Tenenbaumbook}). Newman also discusses a second way his Tauberian theorem leads to the PNT, centered around the von Mangoldt function $\Lambda$, but then had to appeal to Chebyshev inequalities to verify the two-sided Tauberian condition of his theorem. Our approach avoids this extra step.\par

We shall use the following definition of the Fourier transform for functions $f \in L^1$: $\hat{f}(t) = \int^{\infty}_{-\infty} f(x) e^{-itx} \mathrm{d}x$. We shall use some basic results from integration and Fourier theory such as the monotone and dominated convergence theorems, the Riemann-Lebesgue lemma and Plancherel's theorem for $L^2$-functions.

\section{The one-sided Ingham-Karamata Tauberian theorem}

Our proof of the one-sided Ingham-Karamata theorem relies on a clever choice of a certain test function. We first show that such a test function in fact exists.

\begin{lemma} \label{lemexistence} Let $\varepsilon > 0$ be arbitrary. There exists a real-valued function $\phi \in L^{1} \cap L^2$ satisfying the following properties: $\int^{\infty}_{-\infty} \phi(x) \mathrm{d}x = 1$, $\phi(x) \geq 0$ for positive $x$, while $\phi(x) \leq 0$ for negative $x$, $\int^{\infty}_{-\infty} x \phi(x) \mathrm{d}x < \varepsilon$ and the Fourier transform $\hat{\phi}$ vanishes outside a compact. 
\end{lemma}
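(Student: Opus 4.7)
The plan is to construct $\phi$ explicitly as a translate-dilate of a non-negative, band-limited ``bump'' multiplied by $x$. As the underlying bump I would take $\psi(x) = (\sin x/x)^4$, which is non-negative, even, lies in $L^1 \cap L^2$, satisfies the decay estimate $\psi(x) = O(|x|^{-4})$ at infinity, and has $\hat\psi$ supported in $[-4,4]$ (being, up to constants, a fourfold autoconvolution of the indicator of an interval).

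For parameters $a,\lambda > 0$ to be chosen later, I would then set
\[
\phi(x) \;=\; c\,x\,\psi\!\bigl(\lambda(x-a)\bigr),
\]
with $c > 0$ fixed by the normalisation $\int \phi = 1$. The sign conditions on $\phi$ are immediate from $\psi \ge 0$ and $c > 0$; the decay of $\psi$ yields $\phi(x) = O(|x|^{-3})$, so $\phi \in L^1 \cap L^2$; and $\phi$ extends to an entire function of exponential type $4\lambda$ which is $L^2$ on the real line, so Paley--Wiener gives $\hat\phi$ supported in $[-4\lambda, 4\lambda]$. A more elementary argument is also available: $\hat\psi$ is a $C^1$, compactly supported, piecewise polynomial, and $\widehat{xf} = i(\hat f)'$ preserves compact support.

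It remains to arrange the quantitative bound $\int x\phi < \varepsilon$. A short substitution $y = \lambda(x-a)$, together with the evenness of $\psi$ (so $\int y\,\psi(y)\,\mathrm{d}y = 0$), yields
\[
\int_{-\infty}^{\infty}\phi\,\mathrm{d}x \;=\; \frac{c\,a\,\|\psi\|_1}{\lambda}, \qquad \int_{-\infty}^{\infty} x\,\phi(x)\,\mathrm{d}x \;=\; a \,+\, \frac{M_2(\psi)}{a\,\lambda^2\,\|\psi\|_1},
\]
where $M_2(\psi) := \int y^2\psi(y)\,\mathrm{d}y < \infty$ and the second identity already uses $c = \lambda/(a\|\psi\|_1)$, the value forced by $\int\phi = 1$. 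Given $\varepsilon > 0$, I would finish by taking, say, $a = \varepsilon/3$ and then $\lambda$ large enough to force the remaining summand below $2\varepsilon/3$. The only mildly delicate point in the whole argument is the justification of the Fourier support, since $f \mapsto xf$ is not bounded on $L^1$; this is what Paley--Wiener (or the explicit regularity of $\hat\psi$) handles cleanly.
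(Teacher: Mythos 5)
Your proposal is correct and takes essentially the same route as the paper: both build $\phi$ from the band-limited, non-negative kernel $(\sin x/x)^4$ by translating off the origin, multiplying by $x$, and scaling, with the Fourier support handled via $\widehat{xf} = i(\hat f)'$ (the paper merely states the compact support without spelling out the Paley--Wiener/differentiation justification you give). The only cosmetic difference is parametrization: the paper fixes the translation once-and-for-all to normalize $\int\phi_1 = 1$ and then dilates, whereas you keep the center $a$ and dilation $\lambda$ as two free parameters with the normalization constant determined afterwards.
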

\begin{proof} Let $\phi_{0}(x) = \sin^{4}(x)/x^{4}$ whose Fourier transform is supported on $[-4,4]$. We consider $\phi_{1}(x) = x\phi_{0}(x-c) \in  L^{1} \cap L^2$, where $c$ shall be determined shortly. Then $\phi_{1} $ has compactly supported Fourier transform, clearly $x\phi_{1}(x)$ is non-negative and $\int^{\infty}_{-\infty} x \phi_{1}(x) \mathrm{d}x < \infty$. Furthermore,
\begin{equation*}
 \int^{\infty}_{-\infty} x \phi_{0}(x-c)\mathrm{d}x = c\int^{\infty}_{-\infty} \frac{\sin^{4}(x)}{x^{4}}\mathrm{d}x + \int^{\infty}_{-\infty} \frac{\sin^{4}(x)}{x^{3}}\mathrm{d}x = \frac{2 \pi c}{3}.
\end{equation*} 
We choose $c = 3/(2\pi)$. The lemma now follows upon choosing $\phi(x) = \lambda \phi_{1}(\lambda x)$ for a sufficiently large $\lambda$ ($> \varepsilon^{-1}\int^{\infty}_{-\infty} x \phi_{1}(x) \mathrm{d}x$).
\end{proof}

We show the following version of the Ingham-Karamata Tauberian theorem.

\begin{theorem} \label{thonesided} Let $S:  [0,\infty) \rightarrow \mathbb{R}$ be such that $S(x) + Mx$ is non-decreasing for a positive constant $M$. Suppose that the improper integral
\begin{equation}
 \mathcal{L}\{S;s\} := \int^{\infty}_{0} e^{-sx} S(x) \mathrm{d}x \ \ \ \text{converges for } \Re s > 0,
\end{equation} 
and that $\mathcal{L}\{S;s\}$ admits a continuous extension to $\Re s = 0$. Then 
\begin{equation}
 \lim_{x \rightarrow \infty} S(x) = 0.
\end{equation}
\end{theorem}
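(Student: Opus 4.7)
My goal is to show that, for arbitrary $\varepsilon>0$, both $\limsup_{x\to\infty}S(x)\le M\varepsilon$ and $\liminf_{x\to\infty}S(x)\ge -M\varepsilon$; letting $\varepsilon\to 0$ then yields $S(x)\to 0$. Both bounds will come from convolving $S$ (extended by zero on the negative reals) against the test function $\phi$ of Lemma~\ref{lemexistence} and, for the lower bound, against its reflection $\tilde\phi(y):=\phi(-y)$. Two ingredients combine: the sign asymmetry of $\phi$ together with the monotonicity of $S+Mu$ produces a pointwise \emph{inequality} controlling $S(x)$ by the convolution, while Fourier inversion combined with the continuous boundary extension of $\mathcal{L}\{S;\cdot\}$ forces the convolution itself to \emph{vanish} at infinity.

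\textbf{The inequality.} For $\sigma>0$, set $F_\sigma(x):=\int_0^\infty S(u)e^{-\sigma u}\phi(u-x)\,du$. The kernel $e^{-\sigma u}\phi(u-x)$ is non-negative on $\{u>x\}$ and non-positive on $\{u<x\}$; on these half-lines the Tauberian hypothesis gives $S(u)-S(x)\ge -M(u-x)$ and $S(u)-S(x)\le -M(u-x)$ respectively. Multiplying by the kernel preserves the direction in both regions (the sign reversal in the second region cancels that of the inequality), so integration yields
\[
F_\sigma(x)\;\ge\;S(x)\int_0^\infty e^{-\sigma u}\phi(u-x)\,du-M\int_0^\infty(u-x)e^{-\sigma u}\phi(u-x)\,du.
\]
Dominated convergence ($\phi$ and $y\phi(y)$ are in $L^1$) passes the two integrals to $\int_{-x}^\infty\phi(y)\,dy$ and $\int_{-x}^\infty y\phi(y)\,dy$ as $\sigma\to 0^+$, and these approach $1$ and $\int y\phi<\varepsilon$ respectively as $x\to\infty$. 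The sign of $\phi$ on $(-\infty,0)$ forces $\int_{-x}^\infty\phi\ge 1$, so once $F_\sigma(x)$ is shown to converge to some $H(x)\to 0$, dividing through by this positive quantity produces $\limsup S(x)\le M\int y\phi<M\varepsilon$. The lower bound follows by the same argument applied to $\tilde\phi$, which has the opposite sign pattern.

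\textbf{Evaluating the convolution.} For $\sigma>0$, the one-sided lower bound $S_-(u)\le Mu+|S(0)|$ combined with convergence of $\mathcal{L}\{S;\sigma\}$ forces $S_\sigma:=S\,e^{-\sigma\cdot}\,\mathbf{1}_{[0,\infty)}\in L^1(\mathbb{R})$, with Fourier transform $\mathcal{L}\{S;\sigma+it\}$. Since $\hat\phi$ is compactly supported, Fourier inversion applied to $F_\sigma=S_\sigma * \check\phi$ gives
\[
F_\sigma(x)=\frac{1}{2\pi}\int_{-\infty}^\infty\mathcal{L}\{S;\sigma+it\}\,\hat\phi(-t)\,e^{itx}\,dt.
\]
Continuity of $\mathcal{L}\{S;\cdot\}$ up to $\Re s=0$ together with compactness of $\operatorname{supp}\hat\phi$ permit passing $\sigma\to 0^+$ under the integral by dominated convergence, producing $H(x):=\tfrac{1}{2\pi}\int\mathcal{L}\{S;it\}\hat\phi(-t)e^{itx}\,dt$. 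The integrand is bounded with compact support, hence in $L^1$, so $H(x)\to 0$ by the Riemann--Lebesgue lemma, closing the loop.

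\textbf{Main obstacle.} The delicate point is the boundary passage at $\Re s=0$: the Laplace integral need not converge absolutely there, and $\mathcal{L}\{S;it\}$ is only assumed continuous in $t$, without decay. The detour through the test function of Lemma~\ref{lemexistence} is engineered precisely to neutralize this: multiplying by the compactly supported $\hat\phi$ localizes the boundary values to a fixed compact interval, where continuity alone supplies both the dominated convergence needed to reach $H$ and the $L^1$-integrability needed to invoke Riemann--Lebesgue. Performing Fourier inversion at $\sigma>0$ (where $L^1$-theory applies cleanly) and only afterward pushing $\sigma\to 0^+$ on the Fourier side means one never has to define $\mathcal{L}\{S;\cdot\}$ as a genuinely convergent integral on the imaginary axis.
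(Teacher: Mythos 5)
Your proof is correct and follows essentially the same strategy as the paper: the test function of Lemma~\ref{lemexistence}, a Fourier representation of a smoothed average whose boundary values are localized by $\operatorname{supp}\hat\phi$ so that Riemann--Lebesgue applies, and a one-sided inequality extracted from the sign pattern $y\phi(y)\ge 0$ together with the monotonicity of $S(u)+Mu$. The two local variants you introduce --- deriving $S_\sigma\in L^1$ directly from improper convergence plus the Tauberian lower bound (so that $L^1$ Fourier inversion replaces the paper's preliminary exponential bound and $L^2$ Plancherel), and carrying $\sigma>0$ through the Tauberian inequality before letting $\sigma\to 0^+$ rather than first establishing the $\sigma\to 0^+$ limit of the convolution --- are both sound and, if anything, slightly streamline the argument.
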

\begin{proof} We first show the very rough upper bound $S(x) \leq e^{\sigma x}$ for every sufficiently large $x$ (depending on $\sigma$) and all $\sigma > 0$. For this we will just need the Tauberian condition of $S$ and the convergence of the Laplace transform for $\sigma > 0$. In this case we may suppose without loss of generality that $S$ is positive and increasing. Assume that the bound does not hold. Then there exists a sequence $x_{n} \rightarrow \infty$, with $x_{n+1}-x_{n} \geq 1$ and $S(x_{n}) \geq e^{\sigma x_{n}}$. Then,
\begin{equation*}
 \int^{\infty}_{0} e^{-\sigma x} S(x) \mathrm{d}x \geq \sum_{n= 1}^{\infty} \int^{x_{n}+1}_{x_{n}} e^{-\sigma x} S(x) \mathrm{d}x \geq  e^{-\sigma} \sum_{n = 1}^{\infty} 1 = \infty.
\end{equation*}
Therefore, $S$ satisfies the required exponential bound and this shows in particular\footnote{The Tauberian condition immediately implies the lower bound $S(x) \geq -Mx + S(0)$ for $x \geq 0$.} that $S(x) e^{-\sigma x} \in L^{2}$ for every $\sigma > 0$. 

The first step of the proof is to translate the behavior of the Laplace transform of $S$ into asymptotic information on a certain convolution relation $S \ast \phi$. We shall impose the conditions of Lemma \ref{lemexistence} on $\phi$: $\phi \in L^{1} \cap L^2$, $\int^{\infty}_{-\infty} \phi(x) \mathrm{d}x = 1$, $x \phi(x)$ is positive, $\int^{\infty}_{-\infty} x \phi(x) \mathrm{d}x < \varepsilon$ and the Fourier transform of $\phi$ vanishes outside a compact interval, say $[-R,R]$. (The value of $R$ depends on $\varepsilon$.) The reason for imposing precisely these conditions will become apparent from the application of the Tauberian condition at the end of the proof. 

Let $h$ be a sufficiently large real number. Let $X = X_{h}> 0$ (which by enlarging $M$ if necessary, we may assume without loss of generality to be finite) be such that $S(x+h) + M(x+h)$ is positive on $(X,\infty)$. For convenience we extend $S(x)$ to be $0$ on the negative half-axis. By the dominated (on $(-h,X)$) and monotone  (on $(X,\infty)$) convergence theorems and Plancherel's formula (for $L^{2}$-functions), we obtain
\begin{align*}
 \int^{\infty}_{-\infty} S(x+h) \phi(x) \mathrm{d}x &= \int^{\infty}_{-h} (S(x+h)+ M(x+h)) \phi(x) \mathrm{d}x - \int^{\infty}_{-h}  M(x+h)\phi(x) \mathrm{d}x \\
&= \lim_{\sigma \rightarrow 0^{+}} \left(\int^{X}_{-h} + \int^{\infty}_{X}\right)   e^{-\sigma x} (S(x+h)+M(x+h))\phi(x) \mathrm{d}x \\
& \quad \quad - \int^{\infty}_{-h}  M(x+h)\phi(x) \mathrm{d}x\\
& = \lim_{\sigma \rightarrow 0^{+}} \int^{\infty}_{-\infty} e^{-\sigma x} S(x+h)\phi(x) \mathrm{d}x\\
& = \lim_{\sigma \rightarrow 0^{+}} \frac{1}{2\pi} \int^{R}_{-R} e^{(\sigma+it)h} \mathcal{L}\{S;\sigma + it \} \hat{\phi}(-t) \mathrm{d}t\\
& = \frac{1}{2\pi} \int^{R}_{-R} G(t) e^{iht} \hat{\phi}(-t) \mathrm{d}t,
\end{align*}
where $G$ and $\hat{\phi}$ are continuous functions. The Riemann-Lebesgue lemma ensures then
\begin{equation} \label{eqconvav}
 \lim_{h \rightarrow \infty} \int^{\infty}_{-\infty} S(x+h) \phi(x) \mathrm{d}x = 0, 
\end{equation}
for functions $\phi$ satisfying the imposed properties.\par
 We now deduce asymptotic information of $S$ from this `average'. Let $\varepsilon > 0$ be arbitrary and let $\phi$ satisfy the hypotheses from Lemma \ref{lemexistence}. Since $S(x) + Mx$ is non-decreasing, we obtain through the properties of $\phi$: 
\begin{align*}
 S(h) &= \int^{\infty}_{-\infty} (S(h) + Mh) \phi(x) \mathrm{d}x  - Mh \leq \int^{\infty}_{-\infty} (S(x+h) + M(x+h)) \phi(x) \mathrm{d}x - Mh\\
& = \int^{\infty}_{-\infty} S(x+h) \phi(x) \mathrm{d}x + M\varepsilon,
\end{align*}
where we take $h$ sufficiently large to ensure that $S(h) + Mh \geq 0$. (This guarantees that the inequality $S(x+h) + M(x+h) \leq S(h) + Mh$ remains valid for $x \leq -h$.) Letting $h \rightarrow \infty$, we obtain $\limsup_{h \rightarrow \infty} S(h) \leq M\varepsilon$. Applying the same procedure with $\phi(-x)$, we obtain $\liminf_{h \rightarrow \infty} S(h) \geq -M\varepsilon$. Since $\varepsilon$ was arbitrary, this completes the proof.
\end{proof}

\begin{remark} The simplification of the proof mainly lies in the Tauberian step at the end. Over the course of the last century, it became well-known \cite{korevaar2005, korevaar2005FR} that \eqref{eqconvav} is a suitable translation of the properties of the Laplace transform, for certain appropriate test functions $\phi$. However, in earlier work \cite{Debruyne-VindasComplexTauberians}, one usually takes $\phi$ to be non-negative. This has as consequence that one can no longer use the Tauberian condition in the same fashion as at the end of the proof. (For $x < 0$, $(S(x+h) + M(x+h))\phi(x) \leq (S(h) + M(h)) \phi(x))$, which is the wrong sign.) One is still able to show the final result however, but the argument is much more delicate. \par
Recently \cite{Debruyne-VindasComplexTauberians}, the hypotheses of continuous extension of the Laplace transform have been weakened to a minimum, to so-called \emph{local pseudofunction boundary behavior}. We mention that the proof we give above works in exactly the same way to obtain this more general result. With some small modifications one can also treat the more general one-sided Tauberian condition of \emph{slow decrease}. 
\end{remark}

\section{The prime number theorem}

In this section we deduce the prime number theorem from our one-sided Theorem \ref{thonesided} and the absence of $\zeta$-zeros on the line $\Re s = 1$. Let $\pi(x)$ be the counting function of the primes. The prime number theorem asserts that
\begin{equation}
 \lim_{x \rightarrow \infty} \frac{\pi(x)}{x/\log x} = 1.
\end{equation}
Since the function $\pi$ does not lend itself well to be treated directly with analytical methods, one usually considers the Chebyshev function $\psi(x)$ and shows
\begin{equation} \label{eqchebyshev}
 \psi(x) := \sum_{p^{k} \leq x} \log p, \ \ \ \lim_{x \rightarrow \infty} \frac{\psi(x)}{x} = 1.
\end{equation}
We are going to show the \emph{sharp Mertens relation}, that is,
\begin{equation} \label{eqsharpmertens}
 \psi_{1}(x) := \sum_{p^{k} \leq x} \frac{\log p}{p^{k}}, \ \ \ \lim_{x \rightarrow \infty} \psi_{1}(x) - \log x = -\gamma,
\end{equation}
where $\gamma$ is the Euler-Mascheroni constant $0.57...$. It is an easy exercise in summation by parts to see that \eqref{eqsharpmertens} implies the PNT. We leave the details to the reader.


\begin{remark} The M\"obius relation \eqref{eqmobius} and the sharp Mertens relation \eqref{eqsharpmertens} are only two of several expressions that are considered to be \emph{PNT equivalences}---these are expressions that can be shown to imply and are implied by the PNT by purely elementary methods, that is, without the use of complex analysis or Fourier theory. The terminology was introduced before the elementary proof of the PNT and has since then lost any logical foundation, but is convenient and remains in use today. Namely, these equivalences are nowadays being studied in the context of \emph{Beurling generalized prime numbers}. We refer to \cite[Ch. 14]{diamond-zhangbook} for a survey on the PNT equivalences in Beurling numbers and to \cite{Debruyne-VindasPNTEquivalences} for some more recent results related to the sharp Mertens relation in Beurling prime number systems. We only mention here that in the world of Beurling numbers, the PNT does not necessarily imply the sharp Mertens relation without additional assumptions.
\end{remark}

 We now come to the deduction of the sharp Mertens relation from our Tauberian theorem. We intend to apply Theorem \ref{thonesided} to $\psi_{1}(e^{x})$. Therefore, we need to compute its Laplace transform. As it shall be given in terms of the $\zeta$ function, we first recall some well-known properties of $\zeta$, that can be found in any good textbook on analytic number theory \cite{MV07,Tenenbaumbook}. The $\zeta$ function is given by
\begin{equation}\label{eqdefzeta}
  \zeta(s) : = \sum_{n= 1}^{\infty} n^{-s}= \prod_{p} \frac{1}{1-p^{-s}}, \ \ \ \Re s > 1.
\end{equation}
The $\zeta$ function admits a meromorphic continuation beyond the line $\Re s =1$, whose only singularity resides at $s = 1$. There $\zeta(s)$ admits a pole of order $1$ with residue $1$ and the constant term of the Laurent expansion of $\zeta$ at $s = 1$ is $\gamma$. Furthermore, $\zeta(s)$ does not have any zeros on the half-plane $\Re s \geq 1$. \par
Taking the logarithmic derivative of \eqref{eqdefzeta} gives
\begin{equation}
 -\frac{\zeta'(s)}{\zeta(s)} = \sum_{p} \sum_{k= 1}^{\infty}  p^{-ks}\log p , \ \ \ \Re s > 1.
\end{equation}
From this, one may deduce the Laplace transform of $\psi_{1}(e^{x})$ by summation by parts, 
\begin{equation}
  \mathcal{L}\{\psi_{1}(e^{x}); s\} = \frac{1}{s} \sum_{p^{k}} p^{-k(s+1)} \log p = -\frac{\zeta'(s+1)}{s\zeta(s+1)}, \ \ \ \Re s > 0.
\end{equation}

We note that we cannot immediately apply Theorem \ref{thonesided} to $\psi_{1}(e^{x})$ as $-\zeta'(s+1)/s\zeta(s+1)$ has a second order pole at $s = 0$. We therefore first subtract the singularities of this Laplace transform. Concretely, we shall apply Theorem \ref{thonesided} to $S(x) := \psi_{1}(e^{x}) - x + \gamma$. Since $\psi_{1}$ is non-decreasing, it clearly follows that $S(x) + x$ is non-decreasing and hence the Tauberian condition of Theorem \ref{thonesided} is fulfilled. Furthermore,
\begin{equation}
 \mathcal{L}\{S;s\} = -\frac{\zeta'(s+1)}{s\zeta(s+1)} -\frac{1}{s^{2}} + \frac{\gamma}{s}, \ \ \ \Re  s > 0,
\end{equation}
and this function admits a continuous extension on $\Re s = 0$ (because of the Laurent expansion of $\zeta$ at $s= 1$ and the absence of zeros of $\zeta$ on the line $\Re s = 1$). Therefore, all the conditions of Theorem \ref{thonesided} are fulfilled and we deduce 
\begin{equation}
 \lim_{x \rightarrow \infty} \psi(e^{x}) - x + \gamma = 0,
\end{equation}
which is equivalent to \eqref{eqsharpmertens} and from which the PNT follows.

\end{document}